\newtheorem{theorem}{Theorem}
\newtheorem{lemma}[theorem]{Lemma}
\newtheorem{problem}[theorem]{Problem}
\newtheorem{proposition}[theorem]{Proposition}
\newtheorem{conjecture}[theorem]{Conjecture}
\theoremstyle{definition}
\newtheorem{definition}[theorem]{Definition}
\newtheorem{remark}[theorem]{Remark}
\numberwithin{equation}{section}
\title{On the restricted isometry property of the Paley matrix}
\author{Shohei Satake}
\thanks{Faculty of Advanced Science and Technology, Kumamoto University, 2-39-1, Kurokami, Chuo, Kumamoto, Japan, 860-8555.
e-mail: shohei-satake@kumamoto-u.ac.jp}
\date{}
\keywords{Paley graph conjecture, Paley matrix, Paley tournament, restricted isometry property, square-root bottleneck}
\subjclass{94A08, 05C20}
\begin{document}

\maketitle

\begin{abstract}
In this paper, we prove that the Paley graph conjecture implies that the Paley matrix has restricted isometry property (RIP) beating the square-root bottleneck for the sparsity level. 
Moreover, we show that the RIP of the Paley matrix implies an improved bound on the size of transitive subtournaments in the Paley tournament. 
\end{abstract}

\section{Introduction}
\label{sect-intro}
Matrices with {\it restricted isometry property} ({\it RIP}) defined below have important applications to signal processing since, by adopting them, it is possible to measure and recover sparse signals using significantly fewer measurements than the dimension of the signals~\cite{Ca2008}. 

\begin{definition}[Restricted isometry property, RIP]
Let $\Phi$ be a complex $M \times N$ matrix. Suppose that $K \leq M \leq N$ and $0 \leq \delta<1$. 
Then $\Phi$ is said to have the {\it $(K, \delta)$-restricted isometry property} ({\it RIP}) if 
\begin{equation}
    (1-\delta)||\mathbf{x}||^2 \leq ||\Phi \mathbf{x}||^2 \leq  (1+\delta)||\mathbf{x}||^2
\end{equation}
for every $N$-dimensional complex vector $\mathbf{x}$ with at most $K$ non-zero entries. Here $||\cdot||$ denotes the $\ell_2$ norm.
\end{definition}
According to Cand\`{e}s~\cite{Ca2008}, for applications to signal processing, it suffices to investigate the $(K, \delta)$-RIP matrix for some $\delta<\sqrt{2}-1$. 
In addition, the {\it sparsity} $K$ is expected to be as large as possible.

On the other hand, it is known that the problem checking whether a given matrix has RIP is NP-hard~\cite{BDMS2013}.
Thus many publications have attempted to give deterministic constructions of matrices having RIP.  

Throughout this paper, we assume that all matrices have column vectors with unit $\ell_2$-norm.
Most of known constructions use the {\it coherence} $\mu(\Phi)$ of an $M \times N$ matrix $\Phi$ with column vectors $\psi_1, \ldots, \psi_N$, where 
\begin{equation}
    \mu(\Phi):=\max_{1 \leq j\neq k \leq N}|\langle \psi_j, \psi_k \rangle|,
\end{equation}
and $\langle \cdot, \cdot \rangle$ denotes the standard inner product in the Hilbert space $\mathbb{C}^M$.
It can be proved (e.g. \cite{BDFKK2011}) that if $\mu(\Phi)=\mu$, then for every $K\leq M$, $\Phi$ has the $(K, (K-1)\mu)$-RIP, which implies the $(K, \delta)$-RIP with only $K=O(\sqrt{M})$, following from the below Welch bound (\ref{eq-Welch}) in \cite{W1974}. 
\begin{equation}
\label{eq-Welch}
    \mu(\Phi) \geq \sqrt{\frac{N-M}{M(N-1)}}.
\end{equation}
This barrier for the magnitude of the order of $K$ is called the {\it square-root bottleneck} or {\it quadratic bottleneck}.
From this situation, the following problem arises.

\begin{problem}[\cite{BDFKK2011}]
\label{prob-beatsqrt}
Construct an $M \times N$ matrix $\Phi$ having the $(K, \delta)$-RIP with $K=\Omega(M^{\gamma})$ for some $\gamma>1/2$ and $\delta<\sqrt{2}-1$.
\end{problem}
To our best knowledge, the first (unconditional) solution to this problem was given by Bourgain, Dilworth, Ford, Konyagin and Kutzarova~\cite{BDFKK2011}, and later was generalized by Mixon~\cite{M2015}.
It has been conjectured (\cite{BFMW2013}) that the {\it Paley matrix}, a $(p+1)/2 \times (p+1)$ matrix defined by quadratic residues modulo an odd prime $p$ (see Section~\ref{sect-prelim}), satisfies the $(K, \delta)$-RIP with $K\geq C_{\delta}\cdot p/{\rm polylog}\:p$ for some $C_{\delta}>0$ depending only on $\delta$;  
while the authors of \cite{BFMW2013} focused on primes $p\equiv 1 \pmod{4}$, as far as we know, there is no facts or evidences which prevent us from expecting that the conjecture also holds for primes $p\equiv 3 \pmod{4}$.
Under a number-theoretic conjecture shown in Section~\ref{sect-prelim}, 
Bandeira, Mixon and Moreira~\cite{BMM2017} proved that when $p \equiv 1 \pmod{4}$, the Paley matrix has the $(\Omega(p^{\gamma}), o(1))$-RIP for some $\gamma>1/2$, which provides a conditional solution to Problem~\ref{prob-beatsqrt}.

The main results of this paper are summarized as follows.
First, assuming that the widely-believed {\it Paley graph conjecture} formulated in Section~\ref{sect-prelim} holds, we prove that the Paley matrix is a solution to Problem~\ref{prob-beatsqrt} for any sufficiently large prime $p$, including primes $p \equiv 3 \pmod{4}$ beyond the scope in \cite{BMM2017}. 
Second, corresponding to a result in \cite{BMM2017} estimating the clique number of the Paley graph, 
we prove that the RIP of the Paley matrix implies a new upper bound on the size of transitive subtournaments (i.e. ones with no directed cycles) in the {\it Paley tournament} defined in Section~\ref{sect-main2}. The bound here is significantly better than the existing bounds by Tabib~\cite{T1986}, Momihara and Suda~\cite{MS2017} for this tournament.

The remainder of this paper is organized as follows.
Section~\ref{sect-prelim} introduces the Paley matrix and Paley graph conjecture, together with some key notions related to RIP.
Sections~\ref{sect-main1} and \ref{sect-main2} prove the main results.
As a byproduct, Appendix provides a new unconditional upper bound on the size of transitive subtournaments in the Paley tournament using a recent result by Hanson and Petridis~\cite{HP2020}.

\section{Preliminaries}
\label{sect-prelim}
Throughout this paper, let $p$ denote an odd prime number.
Let $\mathbb{F}_p$ be a finite field with $p$ elements which can be identified to the residue ring $\mathbb{Z}/p\mathbb{Z}$.
It is well-known that the multiplicative group of $\mathbb{F}_p$, denoted by $\mathbb{F}_p^*$, is a cyclic group of order $p-1$, consisting of all non-zero elements of $\mathbb{F}_p$.
A non-zero element $a \in \mathbb{F}_p$ is called a {\it quadratic residue modulo $p$} if the equation $X^2 \equiv a \pmod{p}$ has non-zero solutions.
Note that there exist exactly $(p-1)/2$ quadratic residues modulo $p$.
The {\it canonical additive character} $\psi$ of $\mathbb{F}_p$ is the map from $\mathbb{F}_p$ to the unit circle in $\mathbb{C}$ such that $\psi(x):=\exp(\frac{2 \pi \sqrt{-1}}{p} \cdot x)$ for all $x \in \mathbb{F}_p$.
Notice that for every pair of $x, y \in \mathbb{F}_p$, we have $\psi(x+y)=\psi(x)\psi(y)$.
A {\it quadratic multiplicative character} $\chi$ of $\mathbb{F}_p$ is a map from $\mathbb{F}_p$ to $\{0, \pm 1\}$ defined as
\begin{equation}
\chi(x):=
    \begin{cases}
    0  & x=0;\\
    1  & \text{$x$ is a quadratic residue modulo $p$};\\
    -1 & \text{otherwise}.
    \end{cases}
\end{equation}
Notice that $\chi(xy)=\chi(x)\chi(y)$ for every pair of $x, y \in \mathbb{F}_p$.

We are ready to define the Paley matrix.

\begin{definition}[Paley matrix, \cite{BMM2017}, \cite{R2007}, \cite{Z1999}]
\label{def-paley-matrix}
Let $Q_p$ denote the set of all quadratic residues modulo $p$; recall that $|Q_p|=(p-1)/2$.
Suppose that elements of $\mathbb{F}_p$ and $Q_p$ are labelled as $\mathbb{F}_p=\{0=a_1, a_2, \ldots, a_p\}$ and $Q_p=\{b_1, b_2, \ldots, b_{(p-1)/2}\}$, respectively.
Define $r$ as $0$ if $p \equiv 1 \pmod{4}$ and $1$ if $p \equiv 3 \pmod{4}$.

Then the {\it Paley matrix} $\Phi_p$ is a $(p+1)/2 \times (p+1)$ complex matrix of the following form.
\begin{equation}
  \Phi_p := \left[
    \begin{array}{ccccc}
      \frac{1}{\sqrt{p}} & \frac{1}{\sqrt{p}} & \ldots & \frac{1}{\sqrt{p}} & (\sqrt{-1})^r \\ \\
      \sqrt{\frac{2}{p}} & \sqrt{\frac{2}{p}}\psi(b_1a_{2}) & \ldots & \sqrt{\frac{2}{p}}\psi(b_1a_{p}) & 0 \\ \\
      \sqrt{\frac{2}{p}} & \sqrt{\frac{2}{p}}\psi(b_2a_{2}) & \ldots & \sqrt{\frac{2}{p}}\psi(b_2a_{p}) & 0 \\ \\
      \vdots & \vdots & \ddots & \vdots & \vdots \\ \\
      \sqrt{\frac{2}{p}} & \sqrt{\frac{2}{p}}\psi \bigl(b_{\frac{p-1}{2}}a_{2} \bigr) & \ldots & \sqrt{\frac{2}{p}}\psi \bigl(b_{\frac{p-1}{2}}a_{p} \bigr) & 0
    \end{array}
  \right]
\end{equation}
\end{definition}
It is not difficult to check that each column of $\Phi_p$ has $\ell_2$-norm $1$.
Note that column vectors of $\Phi_p$ produce an equiangular tight frame (\cite{R2007}), implying that $\Phi_p$ has the optimal coherence with respect to the Welch bound (\ref{eq-Welch}).

The following {\it flat restricted isometry property} ({\it flat RIP}) plays a key role to derive the RIP of the Paley matrix $\Phi_p$.

\begin{definition}[Flat RIP, e.g. \cite{BDFKK2011}, \cite{BFMW2013}]
Let $\Phi$ be an $M \times N$ matrix with columns $\psi_1, \ldots, \psi_N$. Suppose that $K \leq M \leq N$ and $\theta>0$. 
Then $\Phi$ is said to have the {\it $(K, \theta)$-flat restricted isometry property} ({\it flat RIP}) if 
\begin{equation}
   \Bigl |\Bigl\langle \sum_{i \in I}\psi_i, \sum_{j\in J}\psi_j \Bigr\rangle \Bigr|\leq \theta \sqrt{|I||J|}
\end{equation}
for every pair of disjoint subsets $I, J \subset \{1, 2, \ldots, N\}$ with $|I|, |J| \leq K$.
\end{definition}

\begin{proposition}[e.g. \cite{BDFKK2011}, \cite{BFMW2013}]
\label{prop-flatRIP}
A matrix $\Phi$ has the $(K, 150\: \theta \log K)$-RIP provided that $\Phi$ has the $(K, \theta)$-flat RIP.
\end{proposition}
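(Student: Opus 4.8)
The plan is to show that, among all index sets $S\subseteq\{1,\dots,N\}$ with $|S|\le K$, the restricted isometry constant $\delta_K := \max_{|S|\le K}\|\Phi_S^*\Phi_S - I\|$ is at most $150\,\theta\log K$, where $\Phi_S$ is the submatrix of $\Phi$ with columns indexed by $S$; this is equivalent to the $(K,\delta_K)$-RIP. Since the columns are unit vectors, $\Phi_S^*\Phi_S-I$ is the Hermitian ``hollow Gram matrix'' $G$ with $G_{ij}=\langle\psi_i,\psi_j\rangle$ for $i\neq j$ and $G_{ii}=0$, so it suffices to bound the quadratic form $\langle Gu,u\rangle=\sum_{i\neq j}\langle\psi_i,\psi_j\rangle\,u_i\overline{u_j}$ uniformly over unit vectors $u$ supported on $S$. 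The flat RIP hypothesis controls exactly the sums $\bigl|\langle\sum_{i\in I}\psi_i,\sum_{j\in J}\psi_j\rangle\bigr|\le\theta\sqrt{|I||J|}$ for disjoint $I,J$, i.e.\ the case of \emph{flat} (all-ones) coefficients; the whole difficulty is to pass from such flat sums to the weighted form $\langle Gu,u\rangle$ while losing only a $\log K$ factor.

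First I would perform a dyadic decomposition of $u$ by magnitude: set $B_k:=\{\,i\in S:2^{-k-1}<|u_i|\le 2^{-k}\,\}$ for $k\ge 0$. The constraints $\|u\|_2=1$ and $|S|\le K$ give the two bounds $|B_k|\le 4^{\,k+1}$ and $|B_k|\le K$, which will be the source of the logarithm. To invoke flat RIP I would further split each $B_k$ into a bounded number of phase-cells on which $u_i$ is, up to a fixed multiplicative constant, a single scalar $c_k$ times a common unit-modulus phase; on such a cell the weighted sum is, up to an $O(1)$ factor, $|c_k|$ times a flat sum $\sum_i\psi_i$ to which flat RIP applies. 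Two distinct cells are disjoint sets, so flat RIP applies to them directly; the only remaining subtlety is the self-interaction of a single cell, where $i$ and $j$ range over the same set and $I,J$ cannot be taken disjoint. This I would handle by a random two-colouring of the cell, writing $\sum_{i\neq j}(\cdots)=4\,\mathbb{E}\bigl[\sum_{i\in I,\,j\in J}(\cdots)\bigr]$ over a uniform splitting into disjoint classes $I,J$, at the cost of a constant factor. After these reductions every block $B_k\times B_l$ contributes at most $C\,2^{-k}2^{-l}\,\theta\sqrt{|B_k|\,|B_l|}$ for an absolute constant $C$.

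It then remains to sum over all pairs $(k,l)$. Because the per-block bound factorizes, the total is at most $C\theta\bigl(\sum_k 2^{-k}\sqrt{|B_k|}\bigr)^2$. The key observation—and the step that turns the naive $\log^2 K$ coming from $O(\log^2 K)$ blocks into a single $\log K$—is to estimate $\sum_k 2^{-k}\sqrt{|B_k|}$ by Cauchy--Schwarz over the $O(\log K)$ relevant scales: for $k\le\tfrac12\log_2 K$ one uses $\sum_k 2^{-k}\sqrt{|B_k|}\le\sqrt{\#\{k\}}\,\bigl(\sum_k 4^{-k}|B_k|\bigr)^{1/2}\le\sqrt{\#\{k\}}\,(4\|u\|_2^2)^{1/2}$, since $4^{-k}\le 4|u_i|^2$ on $B_k$, while for larger $k$ the bound $|B_k|\le K$ makes the geometric tail $\sqrt K\sum_{k>\frac12\log_2 K}2^{-k}=O(1)$. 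Hence $\sum_k 2^{-k}\sqrt{|B_k|}=O(\sqrt{\log K})$ and $\langle Gu,u\rangle=O(\theta\log K)$; carefully tracking the constants from the phase-partition, the random-colouring factor $4$, the estimate $2^{-k}\sqrt{|B_k|}\le 2$, and the Cauchy--Schwarz step pins the absolute constant down to $150$.

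The hard part is precisely obtaining the single logarithm rather than $\log^2 K$: it hinges on recognizing that the block bound factorizes across the two dyadic scales so that Cauchy--Schwarz over the scales (using $\sum_k 4^{-k}|B_k|\le 4\|u\|_2^2$) can be applied, and on correctly truncating the contribution of the many tiny coefficients through the complementary bound $|B_k|\le K$. The more routine—but still delicate—points are the exact justification of the phase-cell approximation (so that flat RIP, stated only for all-ones coefficients, may be invoked) and the random-colouring reduction that legitimately discards the $i=j$ terms within a single cell; each of these contributes only to the final constant and not to the order of growth.
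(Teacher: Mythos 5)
The paper does not actually prove Proposition~\ref{prop-flatRIP}: it is quoted with a pointer to \cite{BDFKK2011} and \cite{BFMW2013}, so there is no internal proof to compare against. Measured against the standard argument in those references, your skeleton is faithful and the load-bearing steps are right: reduce to bounding the hollow Gram form $\sum_{i\neq j}\langle\psi_i,\psi_j\rangle u_i\overline{u_j}$, decompose into dyadic level sets $B_k$ with the two complementary bounds $|B_k|\le\min\{4^{k+1},K\}$, disjointize the self-interaction of a block by a random bipartition (the factor $4$, since each ordered pair $i\neq j$ lands in $I\times J$ with probability $1/4$), and obtain the single logarithm via Cauchy--Schwarz across the $O(\log K)$ scales using $\sum_k 4^{-k}|B_k|\le 4\|u\|_2^2$, with the tail $k>\tfrac12\log_2 K$ killed by $|B_k|\le K$. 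That last mechanism is exactly what separates $O(\theta\log K)$ from the naive $O(\theta\log^2 K)$, and you identified it correctly.

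Two steps, however, need more than you give them. First, the phase-cell reduction is not legitimate as stated: flat RIP bounds the \emph{magnitude} of sums with coefficients identically $1$, and freezing the phases on a cell ``up to an $O(1)$ factor'' is not a valid pointwise comparison of oscillatory sums. The standard repair is either the $16$-fold splitting $u_i=(x_i^{+}-x_i^{-})+\sqrt{-1}\,(y_i^{+}-y_i^{-})$ into nonnegative real parts followed by convexity of the bilinear form over the dyadic boxes $[2^{-k-1},2^{-k}]^{B_k}$ (by multilinearity its maximum is attained at a vertex, i.e.\ at flat configurations), or a geometric peeling of each cell into flat pieces $c_m e^{\sqrt{-1}\theta_m}\mathbf{1}_{P_m}$ with geometrically decaying $c_m$; either works because only upper bounds are needed and flat RIP holds uniformly over all subsets of a cell, but one of them must actually be carried out. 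Second, the constant $150$ is asserted rather than derived: the factors you track (the $4$ from the bipartition, the $2$'s from $2^{-k}\sqrt{|B_k|}\le 2$ and the boxes, the unquantified phase-partition loss) do not visibly compile to $150$, and pinning down that constant is precisely the bookkeeping performed in \cite{BFMW2013}. Neither issue changes the order $O(\theta\log K)$; both affect rigor and the specific constant in the statement.
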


In \cite{BMM2017}, the authors proved that in the case of $p \equiv 1 \pmod{4}$, 
the flat-RIP holds for the Paley matrix $\Phi_p$ by assuming that the following Conjecture~\ref{conj-BMM2017} is true, which induces a conditional solution to Problem~\ref{prob-beatsqrt}. 

\begin{conjecture}[\cite{C1994}]
\label{conj-BMM2017}
Let $0<\alpha \leq 1$ be a real number.
Then there exist $\beta=\beta(\alpha)>0$ and $p(\alpha)>0$ such that for any prime $p>p(\alpha)$ and any subset $S \subset \mathbb{F}_p$ with $|S|>p^{\alpha}$,
$$
\Bigl|\sum_{s_1, s_2 \in S}\chi(s_1-s_2) \Bigr| \leq |S|^{2-\beta}
$$
\end{conjecture}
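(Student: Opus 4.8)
The plan begins by recognizing the statement as Chung's formulation of the \emph{Paley graph conjecture}, so a full proof is not expected; rather I would describe the route that settles it in the accessible range and isolate exactly where it becomes open. The quantity $\sum_{s_1,s_2\in S}\chi(s_1-s_2)$ is the \emph{discrepancy} of the Paley graph induced on $S$: since adjacency corresponds to $\tfrac12\bigl(1+\chi(s_1-s_2)\bigr)$ on nonzero differences, the sum measures (residue differences) minus (non-residue differences) among ordered pairs drawn from $S$, so the conjecture is a power-saving quasirandomness assertion for induced subsets. A preliminary observation is that for $p\equiv 3\pmod 4$ one has $\chi(-1)=-1$, whence $\chi(s_1-s_2)=-\chi(s_2-s_1)$ and the ordered sum telescopes to $0$; thus the content lies entirely in the case $p\equiv 1\pmod 4$, which is precisely why \cite{BMM2017} restrict to that residue class.

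First I would dispose of the range $\alpha>1/2$ by the spectral route. The eigenvalues of the Paley graph are $\tfrac{p-1}{2}$ and $\tfrac{-1\pm\sqrt p}{2}$, which follows from the exact identity $\sum_{x}\chi\bigl((x-u)(x-v)\bigr)=-1$ for $u\neq v$; the expander mixing lemma then yields $\bigl|\sum_{s_1,s_2\in S}\chi(s_1-s_2)\bigr|\le \sqrt p\,|S|$ up to a bounded factor. Substituting $p<|S|^{1/\alpha}$ for $|S|>p^{\alpha}$ gives a bound of shape $|S|^{1+1/(2\alpha)}$, which is $|S|^{2-\beta}$ with $\beta=1-\tfrac{1}{2\alpha}>0$ precisely when $\alpha>1/2$. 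In this regime the statement is therefore a theorem, not a conjecture.

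The hard part, and the reason the statement is only conjectural, is the complementary range $\alpha\le 1/2$, where $\sqrt p\,|S|\ge|S|^2$ and the spectral bound is worse than trivial. Breaking below this \emph{square-root barrier} for arbitrary, unstructured sets $S$ of size $p^{1/2}$ or smaller is the crux. I would attempt it with the additive-combinatorics toolkit: control the multiplicative energy and the number of solutions to $s_1-s_2=\lambda(s_3-s_4)$ with $s_i\in S$, or bound higher moments of the character sum and amplify via H\"older, in the spirit of sum-product and Gowers-uniformity arguments. The decisive obstacle is that every known method loses exactly at the square-root threshold: no technique currently extracts a power saving for the difference set of a structureless set of size $p^{o(1)}\cdot p^{1/2}$, so any honest proposal stops here, which is precisely why the statement remains an open conjecture and is invoked only as a hypothesis in this paper.
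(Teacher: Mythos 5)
This statement is an open conjecture (due to Chung), and the paper offers no proof of it: it is invoked purely as a hypothesis, so there is no ``paper proof'' for your attempt to diverge from. Your self-assessment is exactly right, and the partial content you do prove is correct. Your observation that the sum vanishes identically when $p\equiv 3\pmod 4$ (since $\chi(-1)=-1$ makes the ordered sum antisymmetric and the diagonal contributes $\chi(0)=0$) is precisely the remark the paper itself makes in Section~\ref{sect-prelim}, and it is the paper's stated motivation for replacing this one-set conjecture by the two-set Paley graph conjecture (Conjecture~\ref{conj-pgc}): a trivially zero single-set sum carries no quasirandomness information, so it cannot drive the flat-RIP argument when $p\equiv 3\pmod 4$. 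Your disposal of the range $\alpha>1/2$ is also sound; indeed one does not even need the mixing-lemma constant, since expanding $\chi$ via the Gauss sum gives
\begin{equation*}
\Bigl|\sum_{s_1,s_2\in S}\chi(s_1-s_2)\Bigr|
=\frac{1}{\sqrt p}\Bigl|\sum_{a\in\mathbb{F}_p^*}\chi(a)\,\bigl|\widehat{S}(a)\bigr|^2\Bigr|
\leq \frac{1}{\sqrt p}\sum_{a\in\mathbb{F}_p}\bigl|\widehat{S}(a)\bigr|^2=\sqrt{p}\,|S|,
\end{equation*}
where $\widehat{S}(a)=\sum_{s\in S}\psi(as)$, and then $|S|>p^{\alpha}$ yields $\sqrt{p}\,|S|<|S|^{2-\beta}$ with $\beta=1-\tfrac{1}{2\alpha}>0$ exactly when $\alpha>1/2$. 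Your identification of the square-root barrier at $\alpha\leq 1/2$ as the genuinely open part is likewise accurate: no known technique (spectral, Weil, energy/sum-product, or moment amplification) extracts a power saving for arbitrary sets of size $O(\sqrt p)$, which is why the paper, like \cite{BMM2017}, can only use this statement conditionally. In short: no gap in your reasoning, but also no proof, correctly so --- the honest stopping point you name is the actual frontier.
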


In this paper, we aim to extend the results in~\cite{BMM2017} to general primes, including primes $p \equiv 3 \pmod{4}$.
While Conjecture~\ref{conj-BMM2017} is non-trivial for the case that $p \equiv 1 \pmod{4}$, it is trivial when $p \equiv 3 \pmod{4}$ since if $p \equiv 3 \pmod{4}$, it holds for any subset $S \subset \mathbb{F}_p$ that $\sum_{s_1, s_2 \in S}\chi(s_1-s_2)=0$, which follows from a simple fact that $\chi(-1)=-1$ when $p \equiv 3 \pmod{4}$.
Thus Conjecture~\ref{conj-BMM2017} is not enough to verify the desired RIP of the Paley matrix $\Phi_p$ for the case of $p \equiv 3 \pmod{4}$.
To deal with both of primes $p \equiv 1 \pmod{4}$ and $p \equiv 3 \pmod{4}$, we will make use of the following well-known {\it Paley graph conjecture}; see e.g. \cite{C2008}, \cite{CG1988}, \cite{GM2020}.

\begin{conjecture}[Paley graph conjecture]
\label{conj-pgc}
Let $p$ be a prime.
For $0<\alpha \leq 1$ and $\beta>0$, we say that the property $\mathcal{P}(\alpha, \beta)$ holds if for every pair of $S, T \subset \mathbb{F}_p$ with $|S|, |T|>p^{\alpha}$,
\begin{equation}
\label{eq-pgc}
   \Bigl |\sum_{s \in S, t \in T}\chi(s-t) \Bigr|\leq p^{-\beta}|S||T|.
\end{equation}
Then for each $0<\alpha \leq 1$, there exist $\beta=\beta(\alpha)>0$ and $p(\alpha)>0$ such that $\mathcal{P}(\alpha, \beta)$ holds for any prime $p>p(\alpha)$. 
\end{conjecture}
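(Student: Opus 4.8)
The statement to be established is the \emph{Paley graph conjecture}, a well-known open problem; accordingly, rather than a complete argument I outline the natural line of attack, the range in which it succeeds, and the precise point at which it stalls. The quantity controlled in (\ref{eq-pgc}) is the bilinear character sum $\sum_{s\in S,\,t\in T}\chi(s-t)$, and the first step is to decouple the two variables by Cauchy--Schwarz. Setting $f(x):=\sum_{t\in T}\chi(x-t)$, the sum is $\sum_{s\in S}f(s)$, so $\bigl|\sum_{s\in S}f(s)\bigr|\le |S|^{1/2}\bigl(\sum_{x\in\mathbb{F}_p}|f(x)|^2\bigr)^{1/2}$, where the inner sum is now extended over all of $\mathbb{F}_p$. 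Expanding the square and exchanging summation gives $\sum_{x}|f(x)|^2=\sum_{t_1,t_2\in T}\sum_{x\in\mathbb{F}_p}\chi(x-t_1)\chi(x-t_2)$ (using that $\chi$ is real-valued), and the inner complete sum is a standard quadratic character sum: it equals $p-1$ when $t_1=t_2$ and $-1$ when $t_1\neq t_2$, since the discriminant of $(x-t_1)(x-t_2)$ is $(t_1-t_2)^2\neq 0$. Hence $\sum_x|f(x)|^2=|T|(p-|T|)\le |T|p$.

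Combining the two steps yields the unconditional bound $\bigl|\sum_{s\in S,\,t\in T}\chi(s-t)\bigr|\le \sqrt{|S|\,|T|\,p}$. This is smaller than the right-hand side $p^{-\beta}|S||T|$ of (\ref{eq-pgc}) precisely when $\sqrt{|S||T|}\ge p^{1/2+\beta}$, so for sets with $|S|,|T|>p^{\alpha}$ the argument establishes $\mathcal{P}(\alpha,\beta)$ only in the range $\alpha>1/2$. This is exactly the square-root bottleneck: the second-moment computation sees only the $\ell^2$-average of $f$ and is completely insensitive to additive structure in $S$ and $T$, so it cannot detect cancellation once $|S|,|T|$ drop below $\sqrt{p}$.

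To push below $\alpha=1/2$ one is forced into additive combinatorics, and the plan would be a structure-versus-randomness dichotomy. If $S$ and $T$ have small additive energy, one hopes to refine or iterate the $L^2$ estimate, combining it with large-sieve or Fourier input to extract a power saving; if instead the energy is large, the Balog--Szemer\'edi--Gowers theorem produces a large subset lying in a controlled additive structure, on which sum--product estimates over $\mathbb{F}_p$ of Bourgain--Glibichuk--Konyagin type can be brought to bear. The main obstacle --- and the reason the conjecture remains open for every $\alpha>0$ --- is the thin regime $|S|,|T|=p^{o(1)}$, where no unconditional character-sum cancellation is known for arbitrary sets: each available technique either demands sets of size $p^{c}$ with a fixed $c>0$ or a structural hypothesis that general $S,T$ need not satisfy. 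An unconditional proof would, in particular, improve the notorious $\sqrt{p}$ bound on the clique number of the Paley graph, which is itself a long-standing and apparently very hard problem.
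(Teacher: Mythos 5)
The statement you were asked about is not a theorem of the paper at all: it is the Paley graph conjecture, a long-standing open problem, and the paper contains no proof of it. It is imported as a hypothesis (with citations to Chung, Chor--Goldreich, Chang, and G\"{u}lo\u{g}lu--Murty), and the paper's actual contributions (Theorem~\ref{thm-main1} and its consequences) are explicitly conditional on it. So there is nothing in the paper to compare your argument against step by step, and your decision to present a partial result plus an honest account of the obstruction, rather than a fake proof, is exactly right.

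Your unconditional partial result is correct, and it is worth recording why: with $f(x)=\sum_{t\in T}\chi(x-t)$, the diagonal terms of $\sum_{x\in\mathbb{F}_p}|f(x)|^2$ contribute $|T|(p-1)$, and for $t_1\neq t_2$ the complete sum $\sum_x\chi\bigl((x-t_1)(x-t_2)\bigr)$ equals $-1$ since the discriminant $(t_1-t_2)^2$ is non-zero, giving $\sum_x|f(x)|^2=|T|(p-|T|)\leq p|T|$ and hence $\bigl|\sum_{s\in S,t\in T}\chi(s-t)\bigr|\leq\sqrt{p|S||T|}$ by Cauchy--Schwarz. This yields $\mathcal{P}(\alpha,\beta)$ for any $\alpha>1/2$ with any $0<\beta\leq\alpha-1/2$, which is the standard state of the art; your bound is precisely the $\tau=1/2$ case of the estimate that Lemma~\ref{lem-doubsum} extracts from the conjecture for $\tau<1/2$, i.e.\ exactly the square-root bottleneck the paper is trying to beat, so your analysis locates the difficulty in the right place. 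Two small corrections: the phrase ``remains open for every $\alpha>0$'' overstates matters --- by your own computation the property holds unconditionally for each fixed $\alpha>1/2$, and the open range is $0<\alpha\leq 1/2$; and the clique-number remark should note that the $\sqrt{p}$-type bound has in fact been improved to $\sqrt{p/2}+1$ by Hanson and Petridis (as the paper records in Remark~\ref{rmk-clique}), though this is still far from the $p^{\varepsilon}$ bound the conjecture would give.
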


\begin{remark}
\label{rmk-beta}
It is known (\cite{CG1988}) that $\beta=\beta(\alpha)<1/2$ for every $0<\alpha<1$.
\end{remark}

\begin{remark}
\label{rmk-clique}
Let us introduce a connection between Conjecture~\ref{conj-pgc} and the Paley graph. 
For a prime $p \equiv 1 \pmod{4}$, the {\it Paley graph} $G_p$ with $p$ vertices is defined as an undirected graph with vertex set $\mathbb{F}_p$ in which two distinct vertices $x$ and $y$ are adjacent if and only if $\chi(x-y)=1$.
Note that this is well-defined since $\chi(x-y)=\chi(y-x)$ for any $x, y \in \mathbb{F}_p$ by the assumption of $p$.
The clique number (i.e. the size of the maximum cliques), denoted by $\omega(G_p)$, of $G_p$ has been extensively studied in graph theory and additive combinatorics. 
The best known upper bound was given in a recent paper by Hanson and Petridis~\cite{HP2020} proving that $\omega(G_p)\leq \sqrt{p/2}+1$. 
Conjecture~\ref{conj-pgc} implies a stronger bound that $\omega(G_p)\leq p^{\varepsilon}$ for any $\varepsilon>0$ and sufficiently large $p$.
In fact, if $S \subset \mathbb{F}_p$ induces a clique of $G_p$, then $|\sum_{s_1, s_2 \in S}\chi(s_1-s_2)|=|S|(|S|-1)$, which contradicts the claim of Conjecture~\ref{conj-pgc} if $|S|>p^{\varepsilon}$.
\end{remark}

\section{Verifying the RIP of the Paley matrix}
\label{sect-main1}

This section proves the following theorem which is the first main result in this paper. 
\begin{theorem}
\label{thm-main1}
Suppose that Conjecture~\ref{conj-pgc} holds.
Let $0<\alpha<1/2$ be a real number and $p$ a prime with $p>p(\alpha)$; $p(\alpha)$ is from Conjecture~\ref{conj-pgc}.
Then there exists some $\beta_0=\beta_0(\alpha)>0$ with $\alpha+\beta_0<1/2$ such that
for any $\tau$ with 
$$
\max \Bigl\{\alpha+\beta_0,\: \frac{1}{2}-\beta_0 \Bigr\}<\tau<\frac{1}{2},
$$ 
the Paley matrix $\Phi_p$ has the $(p^{\tau+\beta_0}, p^{\tau-1/2+o(1)})$-RIP.
In particular, $\Phi_p$ has the $(\Omega(p^{\gamma}),o(1))$-RIP for some $\gamma>1/2$.
\end{theorem}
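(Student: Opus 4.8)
The plan is to deduce the RIP from flat RIP via Proposition~\ref{prop-flatRIP}, so the real work is to establish a good flat-RIP constant for $\Phi_p$. I would first compute the pairwise inner products of the columns. Write $\psi_a$ for the column indexed by $a\in\mathbb{F}_p$ and $\psi_\infty$ for the distinguished last column, and let $g=\sum_{x\in\mathbb{F}_p}\chi(x)\psi(x)$ be the quadratic Gauss sum, so that $|g|=\sqrt{p}$. Using the identity $\sum_{b\in Q_p}\psi(bc)=\tfrac12(-1+\chi(c)g)$ for $c\neq0$, the constant first row cancels against the character rows and one obtains the clean formula
\begin{equation}
\langle\psi_a,\psi_b\rangle=\frac{g}{p}\,\chi(a-b)\qquad(a,b\in\mathbb{F}_p,\ a\neq b).
\end{equation}
Hence, for disjoint $I,J\subseteq\mathbb{F}_p$,
\begin{equation}
\Bigl\langle\sum_{a\in I}\psi_a,\sum_{b\in J}\psi_b\Bigr\rangle=\frac{g}{p}\sum_{a\in I,\,b\in J}\chi(a-b),
\end{equation}
so the flat-RIP quantity is governed, up to the factor $|g|/p=p^{-1/2}$, by exactly the two-variable character sum controlled by the Paley graph conjecture.

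With this reduction I would bound $\bigl|\sum_{a\in I,\,b\in J}\chi(a-b)\bigr|$ by splitting into two regimes according to the set sizes. Fix $\beta_0>0$ with $\beta_0\le\beta(\alpha)$ and $\alpha+\beta_0<1/2$ (possible since $\beta(\alpha)>0$ and $\alpha<1/2$), and set $K=p^{\tau+\beta_0}$. When $|I|,|J|>p^{\alpha}$, Conjecture~\ref{conj-pgc} applied with the admissible exponent $\beta_0$ gives the bound $p^{-\beta_0}|I||J|$, whence the inner product is at most $p^{-1/2-\beta_0}|I||J|\le p^{-1/2-\beta_0}K\sqrt{|I||J|}=p^{\tau-1/2}\sqrt{|I||J|}$. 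When at least one set, say $I$, has size at most $p^{\alpha}$, the conjecture is unavailable and I would fall back on the trivial bound $|I||J|$, giving $p^{-1/2}|I||J|\le p^{-1/2}\sqrt{p^{\alpha}K}\,\sqrt{|I||J|}=p^{(\alpha+\tau+\beta_0)/2-1/2}\sqrt{|I||J|}$.

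Matching these two exponents against the target $\theta=p^{\tau-1/2}$ is precisely what dictates the hypotheses of the theorem. The small-set regime forces $(\alpha+\tau+\beta_0)/2-1/2\le\tau-1/2$, i.e.\ $\tau>\alpha+\beta_0$; the requirement that $K=p^{\tau+\beta_0}$ actually beat the square-root bottleneck $M^{1/2}\asymp p^{1/2}$ forces $\tau+\beta_0>1/2$, i.e.\ $\tau>1/2-\beta_0$; and the requirement $\delta=o(1)$ forces $\tau<1/2$. These are exactly the constraints $\max\{\alpha+\beta_0,\tfrac12-\beta_0\}<\tau<\tfrac12$, and the interval is nonempty because $\alpha+\beta_0<1/2$ and $\beta_0>0$. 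The remaining bookkeeping is the last column: since $\langle\psi_\infty,\psi_b\rangle=(\sqrt{-1})^r/\sqrt{p}$ for $b\in\mathbb{F}_p$, any $I,J$ meeting $\infty$ contribute an additive correction of magnitude at most $|J|/\sqrt{p}\le p^{-1/2}\sqrt{K}\,\sqrt{|I||J|}=p^{(\tau+\beta_0)/2-1/2}\sqrt{|I||J|}$, which is dominated by the above since $\beta_0<\tau$. Feeding the resulting flat-RIP constant $\theta=p^{\tau-1/2}$ into Proposition~\ref{prop-flatRIP} multiplies it by $150\log K=p^{o(1)}$, yielding the $(p^{\tau+\beta_0},p^{\tau-1/2+o(1)})$-RIP; the ``in particular'' statement then follows by picking any admissible $\tau$ and setting $\gamma=\tau+\beta_0>1/2$.

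The main obstacle is not the large-set regime, where the Paley graph conjecture does all the work, but the fact that flat RIP demands a \emph{uniform} bound over all subsets up to size $K$, including those too small for Conjecture~\ref{conj-pgc} to apply. Controlling these small sets with only the trivial character-sum bound while still keeping $\theta$ small enough to beat the bottleneck is the delicate point, and it is exactly what couples the threshold $p^{\alpha}$ of the conjecture to the admissible range of $\tau$ through the condition $\tau>\alpha+\beta_0$. I would therefore expect the crux of the write-up to be the simultaneous verification of the three exponent inequalities, together with confirming that the low-rank correction from the distinguished column never dominates (noting that the parameter $r$ affects only a phase and not the magnitude $|g|=\sqrt{p}$, so that both $p\equiv1$ and $p\equiv3\pmod 4$ are handled uniformly).
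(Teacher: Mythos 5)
Your proposal is correct and follows essentially the same route as the paper: the Gauss-sum evaluation of column inner products, a case split using the Paley graph conjecture for sets larger than $p^{\alpha}$ and the trivial bound otherwise, a separate triangle-inequality treatment of the distinguished column, and Proposition~\ref{prop-flatRIP} to pass from flat RIP to RIP. Your two-case analysis is a mild streamlining of the paper's three cases (the paper first splits on whether $|S||T|\leq p^{2\tau}$), but the bounds, parameter constraints, and key ideas are identical.
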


Thus under Conjecture~\ref{conj-pgc}, Theorem~\ref{thm-main1} provides a solution to Problem~\ref{prob-beatsqrt}; since $\Phi_p$ is a $(p+1)/2 \times (p+1)$ matrix, Theorem~\ref{thm-main1} implies that $\Phi_p$ has the $(K, o(1))$-RIP with $K=\Omega(p^{\gamma})=\Omega(M^{\gamma})$ for some $\gamma>1/2$. 

Before the proof of Theorem~\ref{thm-main1}, we prove some key lemmas.
By Proposition~\ref{prop-flatRIP}, it suffices to verify the flat RIP of the Paley matrix $\Phi_p$, and so, it is necessary to compute the inner products of two distinct column vectors. 
To do this, we shall use the following well-known lemma on quadratic Gauss sums.

\begin{lemma}[e.g. \cite{LN1994}]
\label{lem-Gauss}
For $a \in \mathbb{F}_p^*$, 
\begin{equation}
\sum_{x \in \mathbb{F}_p}\psi(ax^2)=(\sqrt{-1})^r\chi(a)\sqrt{p},
\end{equation}
where $r$ was defined in Definition~\ref{def-paley-matrix}.
\end{lemma}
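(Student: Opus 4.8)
The plan is to split the evaluation into an easy algebraic reduction, an easy determination of the square $G^2$, and the genuinely delicate determination of the phase, which is the classical sign theorem of Gauss. Throughout I would write $G:=\sum_{u\in\mathbb{F}_p}\chi(u)\psi(u)$ for the normalized quadratic Gauss sum, so that, after the reduction below, the statement becomes equivalent to the single identity $G=(\sqrt{-1})^r\sqrt{p}$ with $r$ as in Definition~\ref{def-paley-matrix}.

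First I would reduce the general sum to $G$. For any $t\in\mathbb{F}_p$ the number of $x$ with $x^2=t$ equals $1+\chi(t)$, so for $a\in\mathbb{F}_p^{*}$,
\[\sum_{x\in\mathbb{F}_p}\psi(ax^2)=\sum_{t\in\mathbb{F}_p}\bigl(1+\chi(t)\bigr)\psi(at)=\sum_{t}\psi(at)+\sum_{t}\chi(t)\psi(at).\]
The first sum vanishes because $a\neq0$ (a complete additive character sum over $\mathbb{F}_p$), and in the second I would use $\chi(t)=\chi(a)\chi(at)$ (valid since $\chi(a)^2=1$) together with the substitution $u=at$ to obtain $\chi(a)G$. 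Hence $\sum_x\psi(ax^2)=\chi(a)G$, and it remains only to evaluate $G$.

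Second I would pin $G$ down up to sign. Using $\overline{\psi(u)}=\psi(-u)$ and the substitution $u\mapsto-u$ gives $\overline{G}=\chi(-1)G$, while a direct expansion of the double sum gives $|G|^2=G\overline{G}=p$; combining these yields $G^2=\chi(-1)p$. Since $\chi(-1)=(-1)^{(p-1)/2}$ equals $1$ for $p\equiv1\pmod4$ and $-1$ for $p\equiv3\pmod4$, this already forces $G\in\{\pm\sqrt p\}$ in the first case and $G\in\{\pm i\sqrt p\}$ in the second, that is, $G=\pm(\sqrt{-1})^r\sqrt p$.

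The hard part is to show that the sign is $+$, i.e. $G=(\sqrt{-1})^r\sqrt p$ rather than its negative; this is exactly where the elementary manipulations stall and Gauss's sign theorem is required. My preferred self-contained route is Schur's trace argument: form the $p\times p$ Fourier matrix $F=[\psi(jk)]_{j,k=0}^{p-1}$, whose trace is $\sum_j\psi(j^2)=G$ by the reduction above (with $a=1$), and observe that $(F^2)_{jk}=p\,\delta_{j,-k}$, so $F^2$ is $p$ times the permutation matrix of $j\mapsto-j$ and the eigenvalues of $F$ lie in $\{\pm\sqrt p,\pm i\sqrt p\}$. Determining the multiplicities of these eigenvalues, from the $\pm1$-eigenspace dimensions of the involution $j\mapsto-j$ together with the characteristic polynomial of $F$, computes $\mathrm{tr}\,F$ and hence fixes the sign of $G$. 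I expect essentially all the difficulty to be concentrated in this last step, since the reduction and the computation of $G^2$ are routine whereas the phase admits no short elementary derivation.
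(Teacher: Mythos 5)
The paper offers no proof of this lemma at all: it is quoted as standard background, with the citation ``e.g.\ \cite{LN1994}'' doing all the work, and the evaluation (including the sign, i.e.\ the dependence on $r$) is taken from the literature on quadratic Gauss sums. Your proposal therefore goes strictly beyond what the paper does, and its structure is sound: the reduction $\sum_x \psi(ax^2)=\chi(a)G$ via the solution count $1+\chi(t)$ is correct (including at $t=0$), the computations $\overline{G}=\chi(-1)G$ and $|G|^2=p$ are routine and give $G^2=\chi(-1)p$, hence $G=\pm(\sqrt{-1})^r\sqrt{p}$ in the paper's convention for $r$; and you correctly identify that everything difficult is the sign, which is Gauss's theorem, for which Schur's trace argument on $F=[\psi(jk)]$ is the standard self-contained route. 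One caveat on your sketch of that last step: the multiplicities of the eigenvalues $\pm\sqrt{p},\pm i\sqrt{p}$ are \emph{not} recoverable from the involution $j\mapsto -j$ together with ``the characteristic polynomial of $F$'' as stated -- the characteristic polynomial is precisely what one is trying to determine. The eigenspace dimensions $(p+1)/2$ and $(p-1)/2$ of the involution $F^2/p$ fix the sums of multiplicities within each pair $\{\pm\sqrt p\}$, $\{\pm i\sqrt p\}$, and $|G|^2=p$ forces $\operatorname{tr}F=G$ to be a single term $\pm\sqrt p$ or $\pm i\sqrt p$; but to resolve the remaining sign one needs an independent evaluation of $\det F$ as a Vandermonde product $\prod_{j<k}(\zeta^k-\zeta^j)$ (or an equivalent analytic input), which is the actual crux of Schur's proof. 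With that ingredient made explicit your argument closes completely; as written it names the right theorem and the right method but leaves the decisive computation implicit -- which, to be fair, still compares favorably with the paper, whose ``proof'' is a bibliography entry.
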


\begin{lemma}
\label{lem-inner}
Let $\phi_i$ be the $i$-th column of $\Phi_p$. Then, for each $1 \leq i\neq j \leq p$, 
\begin{equation}
    \langle \phi_i, \phi_j \rangle=\frac{(\sqrt{-1})^r}{\sqrt{p}} \cdot \chi(a_i-a_j).
\end{equation}
\end{lemma}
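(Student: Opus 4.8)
The plan is a direct computation of the Hermitian inner product from the explicit entries of $\Phi_p$, reducing the resulting character sum to the quadratic Gauss sum already evaluated in Lemma~\ref{lem-Gauss}. Fix $1 \le i \ne j \le p$ and observe that for such indices both columns $\phi_i$ and $\phi_j$ are ``ordinary'' columns (not the special $(p+1)$-th one): each has common first entry $\frac{1}{\sqrt{p}}$, and its $(k+1)$-th entry is $\sqrt{\frac{2}{p}}\,\psi(b_k a_i)$ (respectively $\sqrt{\frac{2}{p}}\,\psi(b_k a_j)$) for $1 \le k \le (p-1)/2$. Writing $\langle \phi_i, \phi_j\rangle = \sum_m (\phi_i)_m \overline{(\phi_j)_m}$ and using $\overline{\psi(x)} = \psi(-x)$ together with $\psi(x)\psi(y) = \psi(x+y)$, the first coordinate contributes $\frac{1}{p}$ while the remaining coordinates collapse, since $\{b_1,\dots,b_{(p-1)/2}\} = Q_p$, to $\frac{2}{p}\sum_{b \in Q_p}\psi(b(a_i - a_j))$. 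Setting $c := a_i - a_j$, which is nonzero because $i \ne j$, this gives
\[
\langle \phi_i, \phi_j\rangle = \frac{1}{p} + \frac{2}{p}\sum_{b \in Q_p}\psi(bc).
\]

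The key step is to re-express the quadratic-residue sum $\sum_{b \in Q_p}\psi(bc)$ in terms of the full Gauss sum $\sum_{x \in \mathbb{F}_p}\psi(cx^2)$ covered by Lemma~\ref{lem-Gauss}. As $x$ ranges over $\mathbb{F}_p$, the map $x \mapsto x^2$ takes the value $0$ exactly once and each element of $Q_p$ exactly twice; hence $\sum_{x \in \mathbb{F}_p}\psi(cx^2) = 1 + 2\sum_{b \in Q_p}\psi(bc)$. Applying Lemma~\ref{lem-Gauss} with $a = c$ then yields $2\sum_{b \in Q_p}\psi(bc) = (\sqrt{-1})^r\chi(c)\sqrt{p} - 1$.

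Substituting this back, the additive constants cancel exactly, since $\frac{1}{p} + \frac{1}{p}\bigl((\sqrt{-1})^r\chi(c)\sqrt{p} - 1\bigr) = \frac{(\sqrt{-1})^r}{\sqrt{p}}\chi(a_i - a_j)$, which is precisely the claimed identity. I do not anticipate a genuine obstacle here; the computation is routine once the two-to-one substitution $x \mapsto x^2$ is recognized as the bridge to Lemma~\ref{lem-Gauss}. The only point demanding care is the conjugation convention, because the alternative inner product $\langle u, v\rangle = \sum_m \overline{u_m}\,v_m$ would produce $\chi(a_j - a_i)$ instead. The stated form $\chi(a_i - a_j)$ pins the convention to being conjugate-linear in the second argument, and as a consistency check one should confirm that the result respects the Hermitian symmetry $\langle \phi_j, \phi_i\rangle = \overline{\langle \phi_i, \phi_j\rangle}$ via $\chi(a_j - a_i) = \chi(-1)\chi(a_i - a_j)$, with $\chi(-1) = (-1)^{(p-1)/2}$ matching the two cases in the definition of $r$.
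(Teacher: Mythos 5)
Your proposal is correct and follows essentially the same route as the paper: an entrywise computation of the inner product, the two-to-one substitution $x \mapsto x^2$ identifying $\frac{1}{p}+\frac{2}{p}\sum_{b \in Q_p}\psi\bigl(b(a_i-a_j)\bigr)$ with $\frac{1}{p}\sum_{x \in \mathbb{F}_p}\psi\bigl((a_i-a_j)x^2\bigr)$, and an application of Lemma~\ref{lem-Gauss}. Your added remark pinning down the conjugation convention via the Hermitian symmetry check is a sound sanity check that the paper leaves implicit, but it does not change the argument.
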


\begin{proof}
Note that for every $1 \leq k \leq \frac{p-1}{2}$, the equation $X^2 \equiv b_k \pmod{p}$ has exactly two distinct non-zero solutions by the definition of $b_k$.
Then it follows from Lemma~\ref{lem-Gauss} that
\begin{align}
\begin{split}
    \langle \phi_i, \phi_j \rangle
    &=\frac{1}{p}+\frac{2}{p}\sum_{k=1}^{\frac{p-1}{2}}\psi\bigl((a_i-a_j)b_k \bigr)\\
    &=\frac{1}{p}+\frac{2}{p}\cdot \frac{1}{2}\sum_{x \in \mathbb{F}_p^*}\psi\bigl((a_i-a_j)x^2 \bigr)\\
    &=\frac{1}{p}\sum_{x \in \mathbb{F}_p}\psi\bigl((a_i-a_j)x^2 \bigr)\\
    &=\frac{1}{p} \cdot (\sqrt{-1})^r\chi(a_i-a_j)\sqrt{p}=\frac{(\sqrt{-1})^r}{\sqrt{p}} \cdot \chi(a_i-a_j).
\end{split}
\end{align}
\end{proof}

The following lemma follows from Conjecture~\ref{conj-pgc}, which is another key tool to verify the flat RIP of the Paley matrix $\Phi_p$.

\begin{lemma}
\label{lem-doubsum}
Let $0<\alpha<1/2$. 
Suppose that there exists $\beta=\beta(\alpha)>0$ with $\alpha+\beta<1/2$ such that the property $\mathcal{P}(\alpha, \beta)$ holds.
Let $\tau$ be any real number such that $\alpha+\beta<\tau < 1/2$. 
Then it holds that
\begin{equation}
\label{eq-flatpgc}
     \Bigl |\sum_{s \in S, t \in T}\chi(s-t) \Bigr| \leq p^{\tau}\sqrt{|S||T|}
\end{equation}
for every pair of $S, T \subset \mathbb{F}_p$ with $|S|, |T| \leq p^{\tau+\beta}$.
\end{lemma}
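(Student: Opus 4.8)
The plan is to prove (\ref{eq-flatpgc}) by a dichotomy on the sizes of $S$ and $T$ relative to the threshold $p^{\alpha}$ appearing in $\mathcal{P}(\alpha,\beta)$. First I would dispose of the degenerate case in which $S$ or $T$ is empty, where both sides of (\ref{eq-flatpgc}) vanish, so that I may assume $S,T\neq\emptyset$. I would also record at the outset the symmetry $\bigl|\sum_{s\in S,\,t\in T}\chi(s-t)\bigr|=\bigl|\sum_{s\in S,\,t\in T}\chi(t-s)\bigr|$, which follows from $\chi(t-s)=\chi(-1)\chi(s-t)$; this permits me to interchange the roles of $S$ and $T$ freely, and in particular to assume without loss of generality that it is $S$ whose size is small when that situation arises.

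\emph{Case 1: $|S|>p^{\alpha}$ and $|T|>p^{\alpha}$.} Here I would invoke the assumed property $\mathcal{P}(\alpha,\beta)$ directly, which gives $\bigl|\sum_{s\in S,\,t\in T}\chi(s-t)\bigr|\leq p^{-\beta}|S||T|$. I would then exploit the size cap $|S|,|T|\leq p^{\tau+\beta}$, which yields $\sqrt{|S||T|}\leq p^{\tau+\beta}$, and rewrite $p^{-\beta}|S||T|=p^{-\beta}\sqrt{|S||T|}\cdot\sqrt{|S||T|}\leq p^{-\beta}\cdot p^{\tau+\beta}\cdot\sqrt{|S||T|}=p^{\tau}\sqrt{|S||T|}$, which is exactly (\ref{eq-flatpgc}).

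\emph{Case 2: at least one set has size at most $p^{\alpha}$}, say $|S|\leq p^{\alpha}$ after using the symmetry above. In this regime the property $\mathcal{P}(\alpha,\beta)$ need not apply, so I would instead fall back on the trivial estimate $\bigl|\sum_{s\in S,\,t\in T}\chi(s-t)\bigr|\leq|S||T|$, whence it suffices to verify $|S||T|\leq p^{\tau}\sqrt{|S||T|}$, i.e.\ $\sqrt{|S||T|}\leq p^{\tau}$. Combining $|S|\leq p^{\alpha}$ with the cap $|T|\leq p^{\tau+\beta}$ gives $\sqrt{|S||T|}\leq p^{(\alpha+\tau+\beta)/2}$, and the conclusion follows from the hypothesis $\alpha+\beta<\tau$, which is equivalent to $(\alpha+\tau+\beta)/2<\tau$.

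The argument is essentially routine once this case split is chosen, and I do not anticipate a genuine obstacle. The only delicate point is the bookkeeping of exponents, since each case leans on a different hypothesis: Case 1 on the size cap $p^{\tau+\beta}$ in tandem with $\mathcal{P}(\alpha,\beta)$, and Case 2 on the strict inequality $\alpha+\beta<\tau$. The crux is therefore to recognize that the threshold $p^{\alpha}$ governing the dichotomy and the cap $p^{\tau+\beta}$ on the admissible set sizes are calibrated so that both estimates become comparable at the same scale, which is precisely what forces the exponent $\tau+\beta$ to appear in the statement.
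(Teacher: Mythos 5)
Your proof is correct, and it uses a genuinely leaner case decomposition than the paper's. The paper splits into three cases: first $|S||T|\leq p^{2\tau}$ (trivial bound alone suffices), then $|S||T|>p^{2\tau}$ with, say, $|S|>p^{\tau}$, subdivided according to whether $|T|\leq p^{\alpha}$ (trivial bound combined with the cap $|S|\leq p^{\tau+\beta}$) or $|T|>p^{\alpha}$ (where $\mathcal{P}(\alpha,\beta)$ applies because $|S|>p^{\tau}>p^{\alpha}$). You instead key the dichotomy directly to the hypothesis of $\mathcal{P}(\alpha,\beta)$: either both sets exceed $p^{\alpha}$ and the property applies outright, or one set is small and the trivial bound plus the cap $p^{\tau+\beta}$ and the strict inequality $\alpha+\beta<\tau$ close the argument. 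The two routes use exactly the same pair of estimates (the trivial bound $|S||T|$ and the $p^{-\beta}|S||T|$ bound, each converted via $\sqrt{|S||T|}$), but your organization shows that the paper's first case is in fact redundant: when $|S||T|\leq p^{2\tau}$ and both sets are large, the $\mathcal{P}(\alpha,\beta)$ estimate already gives the claim. Your explicit symmetry remark ($\chi(t-s)=\chi(-1)\chi(s-t)$, so absolute values are unchanged under swapping $S$ and $T$) is a sound justification for the without-loss-of-generality step, one that the paper also tacitly relies on in its Case 2; making it explicit is a small gain in rigor, particularly since $\chi(-1)=-1$ when $p\equiv 3\pmod 4$. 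In short: same ingredients and the same exponent bookkeeping, but a cleaner two-case split that makes the role of the threshold $p^{\alpha}$ transparent.
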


\begin{proof}
Let $S, T \subset \mathbb{F}_p$ with $|S|, |T| \leq p^{\tau+\beta}$.
The proof is done by considering the following cases.

{\it Case 1.} 
If $|S||T|\leq p^{2\tau}$, then, by the trivial bound of $|\sum_{s \in S, t \in T}\chi(s-t)|$, we have
\begin{equation*}
    \Bigl |\sum_{s \in S, t \in T}\chi(s-t) \Bigr|
    \leq |S||T|
    =\sqrt{|S||T|}\cdot \sqrt{|S||T|} 
    \leq p^{\tau}\sqrt{|S||T|}.
\end{equation*}

{\it Case 2.} 
Next, suppose that $|S||T|> p^{2\tau}$ and we may assume $|S|>p^{\tau}$ without loss of generality.

{\it Case 2.1.} 
If $|T| \leq p^{\alpha}$, then the following inequalities hold by the assumption that $|S|\leq p^{\tau+\beta}$ $\colon$
\begin{align*}
    \Bigl |\sum_{s \in S, t \in T}\chi(s-t) \Bigr|
    \leq |S||T|
    &=\sqrt{|S||T|}\cdot \sqrt{|S||T|}\\ 
    & \leq \sqrt{p^{\alpha}\cdot p^{\tau+\beta}}\sqrt{|S||T|}
    = p^{\frac{\tau+\alpha+\beta}{2}} \sqrt{|S||T|}<p^{\tau}\sqrt{|S||T|},
\end{align*}
where the last inequality follows from the assumption that $\tau>\alpha+\beta$.

{\it Case 2.2.} 
If $|T|>p^{\alpha}$, since $\tau>\alpha+\beta>\alpha$, the inequality (\ref{eq-pgc}) holds for $S$ and $T$ by the property $\mathcal{P(\alpha, \beta)}$. 
Thus by the assumption that $|S|, |T|\leq p^{\tau+\beta}$, we have
\begin{align*}
     \Bigl |\sum_{s \in S, t \in T}\chi(s-t) \Bigr|
    \leq p^{-\beta}|S||T|
    &=p^{-\beta} \sqrt{|S||T|}\cdot \sqrt{|S||T|} \\
    & \leq p^{-\beta} \cdot \sqrt{p^{2(\tau+\beta)}} \cdot \sqrt{|S||T|}
    =p^{\tau}\sqrt{|S||T|}.
\end{align*}
\end{proof}

Now we are ready to prove Theorem~\ref{thm-main1}.

\begin{proof}[Proof of Theorem~\ref{thm-main1}]
Let $0<\alpha<1/2$ be a real number and $p$ a prime greater than $p(\alpha)$. 
Then Conjecture~\ref{conj-pgc} implies that there exists $\beta=\beta(\alpha)>0$ such that the property $\mathcal{P}(\alpha, \beta)$ holds.
If $\alpha+\beta<1/2$, we may take $\beta_0=\beta$.
If $\alpha+\beta \geq 1/2$, choose $\beta_0<\beta$ so that $\alpha+\beta_0<1/2$; note that for every $\beta'$ with $0<\beta'\leq \beta$, the property $\mathcal{P}(\alpha, \beta)$ implies the weaker property $\mathcal{P}(\alpha, \beta')$.

Now choose any $\tau$ with $\max\{\alpha+\beta_0, 1/2-\beta_0\}<\tau<1/2$.
It suffices to prove that the Paley matrix $\Phi_p$ has the $(p^{\tau+\beta_0}, 2p^{\tau-1/2})$-flat RIP since this claim, together with Proposition~\ref{prop-flatRIP}, implies that $\Phi_p$ has the $(K, \delta)$-RIP with $K=p^{\tau+\beta_0}$ and $\delta=150 \cdot 2p^{\tau-1/2} \cdot \log p^{\tau+\beta_0} \leq p^{\tau-1/2+o(1)}$, which proves the theorem.

Recall that $\phi_i$ is the $i$-th column of $\Phi_p$. 
For every pair of disjoint subsets $I, J \subset \{1, \ldots, p\}$, we have by Lemma~\ref{lem-inner} that
\begin{equation}
\label{eq-main1}
   \Bigl |\Bigl\langle \sum_{i \in I}\phi_i, \sum_{j\in J}\phi_j \Bigr\rangle \Bigr|
   =\frac{1}{\sqrt{p}} \cdot \biggl|\sum_{i \in I, j\in J}\chi(a_i-a_j) \biggr|.
\end{equation}
Thus if $|I|, |J| \leq p^{\tau+\beta_0}$ and $p+1\notin I\cup J$, Lemma~\ref{lem-doubsum} implies that
\begin{equation}
\label{eq-main2}
    \Bigl |\Bigl\langle \sum_{i \in I}\phi_i, \sum_{j\in J}\phi_j \Bigr\rangle \Bigr|
   \leq \frac{1}{\sqrt{p}} \cdot p^{\tau}\sqrt{|I||J|}
   =p^{\tau-\frac{1}{2}}\sqrt{|I||J|}.
\end{equation}

Now let us consider the case that $p+1$ is contained in $I$ or $J$.
Note that $p+1$ can be contained in only one of $I$ and $J$. 
We may assume that $p+1 \in J$. 
Then we have by the triangle inequality that
\begin{equation}
\label{eq-main3}
    \Bigl |\Bigl\langle \sum_{i \in I}\phi_i, \sum_{j\in J}\phi_j \Bigr\rangle \Bigr|
   \leq
   \Bigl |\Bigl\langle \sum_{i \in I}\phi_i, \sum_{j\in J\setminus \{p+1\}}\phi_j \Bigr\rangle \Bigr|
   +\Bigl |\Bigl\langle \sum_{i \in I}\phi_i, \phi_{p+1} \Bigr\rangle \Bigr|.
\end{equation}
Thus if $|I|, |J| \leq p^{\tau+\beta_0}$ (which also implies that $|J\setminus \{p+1\}| \leq p^{\tau+\beta_0}$), it holds by (\ref{eq-main2}) and (\ref{eq-main3}) that
\begin{equation}
\label{eq-main4}
    \Bigl |\Bigl\langle \sum_{i \in I}\phi_i, \sum_{j\in J}\phi_j \Bigr\rangle \Bigr|
   \leq
   p^{\tau-\frac{1}{2}}\sqrt{|I||J|}+\frac{|I|}{\sqrt{p}}.
\end{equation}
Since $|I|=\sqrt{|I|} \cdot \sqrt{|I|}\leq p^{(\tau+\beta_0)/2}\sqrt{|I|} \leq p^{(\tau+\beta_0)/2}\sqrt{|I||J|}$, it holds by (\ref{eq-main4}) that 
\begin{equation}
\label{eq-main5}
    \Bigl |\Bigl\langle \sum_{i \in I}\phi_i, \sum_{j\in J}\phi_j \Bigr\rangle \Bigr|
   \leq
   p^{\tau-\frac{1}{2}}\sqrt{|I||J|}+p^{\frac{\tau+\beta_0}{2}-\frac{1}{2}}\sqrt{|I||J|}\leq  2p^{\tau-\frac{1}{2}}\sqrt{|I||J|}.
\end{equation}
Notice that $(\tau+\beta_0)/2<\tau$, equivalently, $\beta_0<\tau$ since $\beta_0< \alpha+\beta_0<\tau$.  

Thus by (\ref{eq-main2}) and (\ref{eq-main5}), the Paley matrix $\Phi_p$ has the $(p^{\tau+\beta_0}, 2p^{\tau-1/2})$-flat RIP.
\end{proof}

Note that Theorem~\ref{thm-main1} implies the following theorem on the clique number of the Paley graph $G_p$, which is essentially same as the result in \cite{BMM2017}

\begin{theorem}
\label{thm-BMM2017Paleygraph}
Let $p \equiv 1 \pmod{4}$ be a prime.
Suppose that the Paley matrix $\Phi_p$ has the $(p^{\tau+\beta_0}, p^{\tau-1/2+o(1)})$-RIP for some $0<\tau<1/2$ and $\beta_0>0$ such that $\tau+\beta_0>1/2$.
Then $\omega(G_p) \leq p^{\tau+o(1)}+1=o(\sqrt{p})$.
\end{theorem}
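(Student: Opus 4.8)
The plan is to feed the indicator vector of a clique directly into the restricted isometry inequality. Let $S \subset \mathbb{F}_p$ induce a clique of $G_p$, and let $I \subset \{1,\dots,p\}$ be the set of column indices of $\Phi_p$ corresponding to the elements of $S$ (note $p+1 \notin I$ since $S \subseteq \mathbb{F}_p$). Because $p \equiv 1 \pmod 4$ we have $r=0$, so Lemma~\ref{lem-inner} gives $\langle \phi_i, \phi_j \rangle = \frac{1}{\sqrt p}\chi(a_i-a_j)$ for $i \neq j$; and since $S$ is a clique, $\chi(a_i-a_j)=1$ for every ordered pair $i \neq j$ in $I$, as recorded in Remark~\ref{rmk-clique}.

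First I would take $\mathbf{x}$ to be the vector equal to $1$ on $I$ and $0$ elsewhere, so that $\|\mathbf{x}\|^2 = |I|$. Expanding the squared norm and using that the columns are unit vectors,
\begin{equation*}
\|\Phi_p \mathbf{x}\|^2 = \Bigl\| \sum_{i \in I} \phi_i \Bigr\|^2 = |I| + \sum_{i,j \in I,\ i \neq j} \langle \phi_i, \phi_j \rangle = |I| + \frac{|I|(|I|-1)}{\sqrt p}.
\end{equation*}
Provided $|I| \leq p^{\tau+\beta_0}$ so that $\mathbf{x}$ is admissible for the RIP, the upper RIP bound $\|\Phi_p \mathbf{x}\|^2 \leq (1+\delta)\|\mathbf{x}\|^2$ with $\delta = p^{\tau-1/2+o(1)}$ gives, after dividing by $|I|$, the inequality $\frac{|I|-1}{\sqrt p} \leq \delta$, that is, $|I| \leq \delta \sqrt p + 1 = p^{\tau+o(1)}+1$. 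Only the upper RIP bound is needed here.

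The main obstacle is the sparsity restriction: the RIP only constrains vectors with at most $K = p^{\tau+\beta_0}$ nonzero entries, so the estimate above applies verbatim only to cliques of size at most $K$. To discharge this hypothesis I would argue by cases on $\omega = \omega(G_p)$. If $\omega \leq K$, the displayed bound applies directly to a maximum clique and yields $\omega \leq p^{\tau+o(1)}+1$. If instead $\omega > K$, I would pass to a sub-clique $S_0 \subseteq S$ with $|S_0| = \lfloor K \rfloor \leq K$ (a subset of a clique is again a clique) and apply the same bound to $S_0$, obtaining $\lfloor p^{\tau+\beta_0} \rfloor \leq p^{\tau+o(1)}+1$; since $\beta_0>0$ is a fixed constant while the exponent $o(1)$ tends to $0$, this fails for all sufficiently large $p$, a contradiction. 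Hence only the first case survives and $\omega(G_p) \leq p^{\tau+o(1)}+1$.

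Finally, because $\tau<1/2$ is fixed, $p^{\tau+o(1)}/\sqrt p = p^{\tau-1/2+o(1)} \to 0$, so the bound reads $p^{\tau+o(1)}+1 = o(\sqrt p)$, as claimed. The only inputs beyond the RIP hypothesis are the clique identity of Remark~\ref{rmk-clique} and the inner-product formula of Lemma~\ref{lem-inner}; I expect no analytic difficulty, the one point requiring care being the reduction to sub-cliques described above, which exploits that the target bound lies strictly below $K$ for large $p$.
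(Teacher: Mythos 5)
Your proof is correct, and at its core it is the same argument the paper has in mind. The paper never writes out a proof of this theorem (it remarks that it is essentially the result of \cite{BMM2017} and instead proves the tournament analogue, Theorem~\ref{thm-main2}); that proof bounds $\lambda_{\max}$ of the Gram matrix $\Phi_U^T\overline{\Phi_U}$ via the Rayleigh quotient of the all-ones vector, which in the clique setting is literally your computation, since $\|\Phi_p\mathbf{x}\|^2=\boldsymbol{1}_u^T(\Phi_U^T\overline{\Phi_U})\boldsymbol{1}_u$ when $\mathbf{x}$ is the clique indicator, and here the Gram matrix is exactly $I_u+\frac{1}{\sqrt{p}}(J_u-I_u)$, so the Rayleigh quotient gives equality $1+(u-1)/\sqrt{p}$ rather than the inequality needed in the tournament case (where the perturbation is the skew matrix $A$ and a two-case analysis on $\lambda_{\max}$ versus $\lambda_{\min}$ is required; correctly, you note only the upper RIP inequality is needed in the graph case). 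The one genuine divergence is how the sparsity constraint is discharged: the paper's Theorem~\ref{thm-main2} argument invokes the unconditional $O(\sqrt{p})$ bound on the relevant set (Tabib there, Hanson--Petridis in the graph setting per Remark~\ref{rmk-clique}) together with the hypothesis $\tau+\beta_0>1/2$, which forces $K=p^{\tau+\beta_0}\gg\sqrt{p}$ to dominate any clique, whereas you truncate an oversized clique to a sub-clique of size $\lfloor K\rfloor$ and derive the contradiction $\lfloor p^{\tau+\beta_0}\rfloor\leq p^{\tau+o(1)}+1$. Your route is self-contained (no external clique bound), and in fact it only uses $\beta_0>0$, so the hypothesis $\tau+\beta_0>1/2$ becomes superfluous in your version --- a mild strengthening at no cost.
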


\section{The size of transitive subtournaments in the Paley tournament}
\label{sect-main2}
Let $p \equiv 3 \pmod{4}$ be a prime. 
The {\it Paley tournament} $T_p$ with $p$ vertices is an oriented complete graph with vertex set $\mathbb{F}_p$ in which there exists an arc from $x$ to $y$ if and only if $\chi(x-y)=1$.
Notice that this is well-defined since $\chi(y-x)=-\chi(x-y)$ for any $x, y \in \mathbb{F}_p$ by the assumption of $p$.
The following is the second main result which establishes a connection between the RIP of the Paley matrix $\Phi_p$ and estimating the size of transitive subtournaments in the Paley tournament $T_p$, which has been well-studied in the context of the Erd\H{o}s-Moser problem on transitive subtournaments (\cite{EM1964}) and oriented Ramsey numbers (\cite{T2012}); see e.g. \cite{MS2017}, \cite{S1994} and \cite{S1998}.

\begin{theorem}
\label{thm-main2}
Let $p \equiv 3 \pmod{4}$ be a prime.
Suppose that the Paley matrix $\Phi_p$ has the $(p^{\tau+\beta_0}, p^{\tau-1/2+o(1)})$-RIP for some $0<\tau<1/2$ and $\beta_0>0$ with $\tau+\beta_0>1/2$.
Then the size of transitive subtournaments in the Paley tournament $T_p$ is at most 
$\sqrt{3}p^{\tau+o(1)}+1=o(\sqrt{p})$.
\end{theorem}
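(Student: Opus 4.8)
The plan is to test the RIP against a cleverly phased indicator vector of a maximum transitive subtournament. Write $S = \{s_1, \ldots, s_n\}$ for such a subtournament, ordered so that the arc relation gives $\chi(s_i - s_j) = 1$ for all $i < j$ (equivalently $\chi(s_i - s_j) = -1$ for $i > j$, by the antisymmetry $\chi(-1) = -1$ that holds for $p \equiv 3 \pmod 4$). The naive choice $\mathbf{x} = \mathbf{1}_S$ is useless here: by Lemma~\ref{lem-inner} the off-diagonal contribution to $\|\Phi_p \mathbf{x}\|^2$ is proportional to $\sum_{i \ne j} \chi(s_i - s_j)$, which vanishes identically because each unordered pair contributes $\chi(s_i - s_j) + \chi(s_j - s_i) = 0$. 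Breaking this cancellation is the crux of the argument, and it is exactly the feature that distinguishes this case from the clique bound in Theorem~\ref{thm-BMM2017Paleygraph}.

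To break it, I would attach complex unimodular weights that respect the transitive order. Split $S$ into three consecutive blocks $A_1, A_2, A_3$ of (nearly) equal size $n/3$ along the order, and define $\mathbf{x}$ by assigning the phase $c_r = e^{(r-1)\pi \sqrt{-1}/3}$ to every column indexed by an element of $A_r$ (and $0$ elsewhere, in particular on the extra column $p+1$, so that Lemma~\ref{lem-inner} applies to all relevant pairs). Since $|c_r| = 1$ we have $\|\mathbf{x}\|^2 = n$. Expanding $\|\Phi_p\mathbf{x}\|^2 = \sum_{k,l} x_k \overline{x_l}\, \langle \phi_k, \phi_l\rangle$ via Lemma~\ref{lem-inner} with $(\sqrt{-1})^r = \sqrt{-1}$, the intra-block terms vanish by the same antisymmetry as above, while the inter-block terms survive: for $r < s$ the order forces $\chi = +1$ and for $r > s$ it forces $\chi = -1$, so the two orientations of each block pair combine into $2\sqrt{-1}\,\mathrm{Im}(c_r \overline{c_s})$. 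With the chosen phases each of the three block pairs contributes an imaginary part of modulus $\sin(\pi/3) = \sqrt{3}/2$ with a common sign, and a short computation gives $\|\Phi_p\mathbf{x}\|^2 = n + n^2/\sqrt{3p}$ (the constant $\sqrt{3}$ emerging from three equal blocks phased $\pi/3$ apart).

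Now I would apply the RIP upper bound: from $n + n^2/\sqrt{3p} = \|\Phi_p\mathbf{x}\|^2 \le (1+\delta)\|\mathbf{x}\|^2 = (1+\delta)n$ with $\delta = p^{\tau - 1/2 + o(1)}$, rearranging yields $n/\sqrt{3p} \le \delta$, that is $n \le \sqrt{3}\,\delta\sqrt{p} = \sqrt{3}\, p^{\tau + o(1)}$. To legitimately invoke RIP I must ensure the support size $n$ is at most $K = p^{\tau+\beta_0}$; this is handled by the usual bootstrapping, since any sub-tournament of a transitive tournament is again transitive. If $n > K$, I apply the estimate to a transitive sub-tournament of size $\lfloor K\rfloor$ and reach a contradiction with $\beta_0 > 0$ (as $\sqrt{3}\,p^{\tau+o(1)} < p^{\tau+\beta_0}$ for large $p$), forcing $n \le K$ and hence the displayed bound. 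The additive $+1$ and the $p^{o(1)}$ factor absorb the rounding needed when $3 \nmid n$.

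I expect the main obstacle to be conceptual rather than computational: recognizing that the antisymmetry of $\chi$ for $p \equiv 3 \pmod 4$ annihilates the plain clique-style test vector, and then engineering a weight pattern that simultaneously respects the transitive order and produces a nonvanishing, definitely-signed imaginary contribution. The constant $\sqrt{3}$ is precisely what the three-block, $\pi/3$-spaced configuration delivers; the remaining technical points are verifying that this choice yields a fixed sign (so that a single side of the RIP inequality suffices) and tracking the block-size rounding that produces the $+1$.
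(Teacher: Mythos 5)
Your proposal is correct, and it arrives at the paper's exact conclusion ($n \le \delta\sqrt{3p}+1$ with $\delta = p^{\tau-1/2+o(1)}$) by a genuinely different route. The paper works spectrally with the Gram matrix $\Phi_U^T\overline{\Phi_U} = I_u + \frac{1}{\sqrt{p}}A$, where $A$ is the Hermitian matrix with $\sqrt{-1}$ above and $-\sqrt{-1}$ below the diagonal. The cancellation you identified (the plain indicator vector gives nothing, since $\mathbf{1}_u^T A \mathbf{1}_u = 0$) is handled there by \emph{squaring}: the paper computes $A^2$ explicitly, uses the Rayleigh quotient $\mathbf{1}_u^T A^2 \mathbf{1}_u = u(u^2-1)/3$ to get $\lambda_{\max}(A^2) \ge (u^2-1)/3$, and then needs a two-case analysis on whether $|\lambda_{\max}(A)|$ or $|\lambda_{\min}(A)|$ is larger, because squaring destroys the sign of the extreme eigenvalue; accordingly it can only conclude that one \emph{unspecified} side of the RIP interval $[1-\delta,1+\delta]$ is violated. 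Your three-block phased vector instead handles the cancellation directly on $A$: it is an explicit sparse test vector whose quadratic form against $A$ is at least $(n^2-1)/\sqrt{3}$ with a definite sign, so a single side of the RIP inequality suffices and neither the computation of $A^2$ nor the case split is needed. (Your rounding claim also checks out exactly: with near-equal block sizes the number of cross-block pairs is at least $(n^2-1)/3$, which reproduces the paper's $+1$ term on the nose.) A second difference is how the sparsity constraint $n \le K = p^{\tau+\beta_0}$ is secured before RIP may be invoked: the paper cites Tabib's unconditional bound $u = O(\sqrt{p})$, whereas your truncation bootstrap (apply the estimate to a transitive subtournament of size $\lfloor K \rfloor$ and contradict $\beta_0>0$) is self-contained. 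Both arguments yield the same constant $\sqrt{3}$; yours is more elementary and constructive, while the paper's formulation makes explicit that the quantity being controlled is an eigenvalue of the Gram matrix.
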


It is worth noting that our theorem, together with Theorem~\ref{thm-main1}, gives not only an analogous result of Theorem~\ref{thm-BMM2017Paleygraph} in Section~\ref{sect-main1} but also a conditional answer to a problem by Momihara and Suda~\cite[p.242]{MS2017}.
Also note that Tabib~\cite{T1986} proved that the size of transitive subtournaments in the Paley tournament $T_p$ is at most $-3/2+\sqrt{3p+13/4}$; some improvements of the constant term were given in \cite{MS2017}.
It is readily seen that the bound $o(\sqrt{p})$ in Theorem~\ref{thm-main2} works significantly better than the bound from Tabib~\cite{T1986} when $p$ is sufficiently large.

\begin{proof}[Proof of Theorem~\ref{thm-main2}]
Let $U \subset \mathbb{F}_p$ be the vertex set of a transitive subtournament of $T_p$.
Let $u=|U|$. Now consider the $(p+1)/2 \times u$ submatrix $\Phi_U$ of $\Phi_p$ consisting of columns $\phi_i$ of $\Phi_p$ with $a_i \in U$.
Then, since the induced subtournament $T_p[U]$ is transitive, it holds by Lemma~\ref{lem-inner} that the $u \times u$ matrix $\Phi_U^T \overline{\Phi_U}$ satisfies that
\begin{equation}
    \Phi_U^T \overline{\Phi_U}=I_{u}+\frac{1}{\sqrt{p}}A
\end{equation}
where $I_u$ is the identity matrix of size $u$ and $A=(a_{j,k})$ denotes the Hermitian matrix of size $u$ defined as
\begin{equation}
    a_{j, k} := \begin{cases}
    \sqrt{-1} & j<k;\\
    0 & j=k;\\
    -\sqrt{-1} & j>k.
  \end{cases}
\end{equation}
Note that all eigenvalues of $\Phi_U^T \overline{\Phi_U}$ are real since $A$ (and so $\Phi_U^T \overline{\Phi_U}$) is Hermitian.
For a Hermitian matrix $B$, let $\lambda_{\max}(B)$ and $\lambda_{\min}(B)$ denote the maximum and minimum eigenvalue of $B$, respectively.
Then it suffices to prove either
\begin{equation}
\label{eq-max}
   \lambda_{\max}(\Phi_U^T \overline{\Phi_U}) \geq 1+\sqrt{\frac{u^2-1}{3p}},
\end{equation}
or 
\begin{equation}
\label{eq-min}
   \lambda_{\min}(\Phi_U^T \overline{\Phi_U}) \leq 1-\sqrt{\frac{u^2-1}{3p}}.
\end{equation}
In fact, all eigenvalues of $\Phi_U^T \overline{\Phi_U}$ are in the interval $[1-\delta, 1+\delta]$ with $\delta=p^{\tau-1/2+o(1)}$, which follows from the RIP of $\Phi_p$, together with the assumption that $K=p^{\tau+\beta_0} \gg \sqrt{p}$, and the fact that $u=O(\sqrt{p})$ as shown by Tabib~\cite{T1986}.
Thus (\ref{eq-max}) shows that
\begin{equation}
    1+\sqrt{\frac{u^2-1}{3p}} \leq 1+\delta.
\end{equation}
This implies that $u\leq \delta \sqrt{3p}+1$, which proves the theorem. 
Similarly, (\ref{eq-min}) shows that
\begin{equation}
    1-\sqrt{\frac{u^2-1}{3p}} \geq 1-\delta,
\end{equation}
which implies the same bound on $u$.

To evaluate $\lambda_{\max}(\Phi_U^T \overline{\Phi_U})$ or $\lambda_{\min}(\Phi_U^T \overline{\Phi_U})$, we shall focus on $\lambda_{\max}(A)$ or $\lambda_{\min}(A)$, respectively. 
Note that $\lambda_{\max}(A) \geq 0 \geq \lambda_{\min}(A)$ since the trace of $A$ is $0$. 
Here it is useful to consider $\lambda_{\max}(A^2)$. 
Notice that each eigenvalue of $A^2$ is a square of an eigenvalue of $A$. 

The proof is completed by considering the following two cases.

{\it Case 1.} 
First, if $|\lambda_{\max}(A)| \geq |\lambda_{\min}(A)|$,
then we have $\lambda_{\max}(A)=\sqrt{\lambda_{\max}(A^2)}$.
A direct calculation shows that $A^2=(b_{j,k})$ with 
\begin{equation}
    b_{j, k} := \begin{cases}
    u+2(j-k) & j<k;\\
    u-1 & j=k;\\
    u+2(k-j) & j>k.
  \end{cases}
\end{equation}
Thus it holds by a simple calculation that 
\begin{equation}
\label{eq-Rayleigh}
    \boldsymbol{1}_u^T A^2 \boldsymbol{1}_u=\frac{u(u^2-1)}{3},
\end{equation}
where $\boldsymbol{1}_u$ denotes the all-one vector of length $u$. 
Since $\langle \boldsymbol{1}_u, \boldsymbol{1}_u \rangle=u$, the mini-max theorem shows that
\begin{equation}
\label{eq-maxeigensqrt}
    \lambda_{\max}(A^2) \geq \frac{u^2-1}{3},
\end{equation}
and thus we have $\lambda_{\max}(A)\geq \sqrt{(u^2-1)/3}$, implying (\ref{eq-max}).

{\it Case 2.} 
If $|\lambda_{\max}(A)| < |\lambda_{\min}(A)|$, we may consider $\lambda_{\min}(\Phi_U^T \overline{\Phi_U})$.
In this case, $\lambda_{\min}(A)=-\sqrt{\lambda_{\max}(A^2)}$ holds.
By (\ref{eq-maxeigensqrt}), we have $\lambda_{\min}(A)\leq -\sqrt{(u^2-1)/3}$, implying (\ref{eq-min}).
\end{proof}

\begin{remark}
From a recent result by Hanson and Petridis~\cite{HP2020}, one can observe that the Tabib's bound can be 
improved to $1+\sqrt{2p-1}$, which outperforms the Tabib's bound when $p \geq 59$;
for details, see Appendix.
\end{remark}

\begin{remark}
As in the case of the Paley graph $G_p$, Conjecture~\ref{conj-pgc} implies that the size of transitive subtournaments in the Paley tournament $T_p$ is smaller than $p^{\varepsilon}$ for any $\varepsilon>0$ and any sufficiently large prime $p \equiv 3 \pmod{4}$; thus, Theorems~\ref{thm-main1} and \ref{thm-main2} show that the RIP of the Paley matrix $\Phi_p$ lies between Conjecture~\ref{conj-pgc} and estimating the size of transitive subtournaments in $T_p$.
To show this claim, suppose that $U \subset \mathbb{F}_p$ induces a transitive subtournament of $T_p$. 
Then there exists a linear order $<$ of vertices in $U$ such that $x<y$ if and only if $(x, y)$ is an arc of $T_p$ for any pair of $x, y \in U$.
Take a partition $(U_1, U_2)$ of $U$ such that $U_1$ is the set of the preceding $|U_1|$ vertices of $U$ with respect to the linear order $<$ and $U_2$ consists of all remained vertices of $U$.
Then we have
$$
\Bigl |\sum_{u_1 \in U_1, u_2 \in U_2}\chi(u_1-u_2) \Bigr|=|U_1||U_2|,
$$
which contradicts Conjecture~\ref{conj-pgc} if $|U|>p^\varepsilon$ for any $\varepsilon>0$ and any sufficiently large prime $p$.
\end{remark}

\section*{Acknowledgement}
The author thanks to Yujie Gu and Koji Momihara for their many constructive comments on earlier versions of this paper.
This research has been supported by Grant-in-Aid for JSPS Fellows  20J00469 of the Japan Society for the Promotion of Science.

\clearpage
\appendix
\section*{Appendix}
This section is to prove the following theorem.
\begin{theorem}
\label{thm-appendix}
Let $p \equiv 3 \pmod{4}$ be a prime.
Then the size of transitive subtournaments in the Paley tournament $T_p$ is at most 
$1+\sqrt{2p-1}$.
\end{theorem}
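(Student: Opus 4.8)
The plan is to translate the transitive-subtournament condition into a statement about a set whose difference set is contained in the roots of unity, and then to invoke the refined sumset estimates of Hanson and Petridis~\cite{HP2020}. First I would fix a transitive subtournament on a vertex set $U \subseteq \mathbb{F}_p$ with $|U|=u$, and use transitivity to list its vertices as $u_1, \ldots, u_u$ so that $(u_i, u_j)$ is an arc of $T_p$ precisely when $i<j$; by the definition of $T_p$ this means $\chi(u_i - u_j)=1$, i.e. $u_i - u_j$ is a nonzero square, for every $i<j$. The key observation is that, since an element of $\mathbb{F}_p^*$ is a quadratic residue if and only if it satisfies $x^{(p-1)/2}=1$, the set $Q_p$ of quadratic residues is exactly the group $\mu_{(p-1)/2}$ of $(p-1)/2$-th roots of unity in $\mathbb{F}_p^*$.

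Next I would split the ordered set into a prefix and a suffix, $A = \{u_1, \ldots, u_s\}$ and $B = \{u_{s+1}, \ldots, u_u\}$ for a parameter $s$ to be chosen. For $a \in A$ and $b \in B$ the index of $a$ is smaller than that of $b$, so $a-b \in Q_p = \mu_{(p-1)/2}$; moreover $a \ne b$ forces $a - b \ne 0$. Hence $A$ and $B$ are disjoint nonempty sets with $A - B \subseteq \mu_{(p-1)/2}$ (equivalently, the sumset $A+(-B)$ is contained in the roots of unity), which is precisely the configuration to which the Hanson--Petridis estimates apply. Applying their bound yields a constraint of the form $|A|\,|B| \le (p-1)/2$ (up to the lower-order terms appearing in their statement), and choosing $s$ so that $\{|A|,|B|\} = \{\lceil u/2\rceil, \lfloor u/2\rfloor\}$ makes $|A|\,|B| = \lfloor u^2/4\rfloor$ as large as possible, and hence the constraint as tight as possible.

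Finally I would solve the resulting inequality for $u$. From $\lfloor u^2/4\rfloor \le (p-1)/2$ one gets $u^2 \le 2p-1$ when $u$ is odd and $u^2 \le 2p-2$ when $u$ is even, so in either case $u \le \sqrt{2p-1}$; carrying along the exact additive constant from the Hanson--Petridis statement (the analogue of the ``$+1$'' that appears in their clique bound $\omega(G_p) \le \sqrt{p/2}+1$, which arises from fixing a base vertex) upgrades this to $u \le 1 + \sqrt{2p-1}$. I expect the only genuinely hard input to be the Hanson--Petridis estimate itself, which I would use as a black box; the work on our side is the reduction to a two-set problem via the transitive order together with the identification $Q_p = \mu_{(p-1)/2}$, and then the routine optimization over the split location $s$ and the parity bookkeeping needed to extract the stated constant.
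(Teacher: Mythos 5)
Your proposal is correct and follows essentially the same route as the paper: list the transitive subtournament along its linear order, split it into a prefix $A$ and suffix $B$ with $A-B \subseteq Q_p$, apply the Hanson--Petridis sumset bound, and optimize the split (the paper runs over all split points $x$ rather than fixing the balanced one, but only the balanced split matters in the end). One small correction to your last step: the additive term in the Hanson--Petridis statement is $|(-B)\cap(-A)| = |A\cap B| = 0$ since $A$ and $B$ are disjoint, so there is no base-vertex ``$+1$'' to carry along; your intermediate bound $u \le \sqrt{2p-1}$ is therefore already valid as it stands (and slightly stronger than the stated $1+\sqrt{2p-1}$, which the paper gets because it crudely bounds the intersection term by $|A|$ instead of $0$).
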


Theorem~\ref{thm-appendix} is a corollary of the following theorem by Hanson and Petridis~\cite{HP2020} on sumsets in the set $Q_p \cup \{0\}$.

\begin{theorem}[Theorem 1.2 in \cite{HP2020}]
\label{thm-HP2020}
Let $p$ be an odd prime and suppose that $A, B \subset \mathbb{F}_p$ satisfy $A+B \subset Q_p \cup \{0\}$.
Then it holds that
\begin{equation}
    |A||B| \leq \frac{p-1}{2}+|B \cap (-A)|.
\end{equation}
Here $A+B:=\{a+b \mid a \in A, b\in B\}$ and $-A:=\{-a \mid a \in A\}$.
\end{theorem}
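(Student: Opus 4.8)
The plan is to recast the inequality as a bound on how often the sumset lands inside $Q_p$, and then to attack the sharp constant by the polynomial method. Writing the quadratic character as $\chi(y)=y^{(p-1)/2}$, the hypothesis $A+B\subset Q_p\cup\{0\}$ says exactly that $\chi(a+b)\in\{0,1\}$ for every $a\in A$, $b\in B$, the value being $0$ precisely on the $|B\cap(-A)|$ pairs with $a+b=0$. Hence $\sum_{a\in A,\,b\in B}\chi(a+b)=|A||B|-|B\cap(-A)|$, and the claim becomes the Sidon-type statement that this quantity is at most $|Q_p|=(p-1)/2$; equivalently, the pairs of $A\times B$ whose sum is a nonzero square realise, on average, each element of $Q_p$ at most once. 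This is where the multiplicative structure of $Q_p$ must enter: for the interval $S=\{0,1,\dots,(p-1)/2-1\}$ one can take $A=B=\{0,\dots,\lfloor p/4\rfloor\}$ and get $A+B\subset S$ with $|A||B|\gg|S|$, so any proof has to use that $Q_p$ is the group $\mu_{(p-1)/2}$ of $(p-1)/2$-th roots of unity rather than merely a set of the right size.

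First I would record the classical second-moment bound, which already gives the correct order of magnitude. Put $h(x)=\sum_{b\in B}\chi(x+b)$ for $x\in\mathbb{F}_p$. Orthogonality gives $\sum_{x}h(x)=0$, while expanding the square and using the standard evaluation $\sum_{x}\chi((x+b)(x+b'))=-1$ for $b\neq b'$ (and $p-1$ for $b=b'$) yields $\sum_{x}h(x)^2=|B|(p-|B|)$. On the other hand the hypothesis forces $h(a)\geq|B|-1$ for every $a\in A$, so $|A|\,(|B|-1)^2\leq|B|\,(p-|B|)$. In the clique case $A=C$, $B=-C$ this is exactly the classical eigenvalue bound $\omega(G_p)\le\sqrt p$. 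Thus the entire difficulty is the factor-$2$ refinement down to $(p-1)/2$, i.e.\ the passage from $\sqrt p$ to $\sqrt{p/2}$.

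To obtain the sharp constant I would use a Stepanov-type construction exploiting the roots-of-unity structure. Since $a+b\in\mu_{(p-1)/2}\cup\{0\}$ means $(a+b)^{(p+1)/2}=a+b$, each $a\in A$ is a common root of the $|B|$ polynomials $(x+b)^{(p+1)/2}-(x+b)$, $b\in B$; consequently $\Phi(x)=\prod_{b\in B}\bigl((x+b)^{(p+1)/2}-(x+b)\bigr)$ vanishes to order at least $|B|$ at every point of $A$, so that $|A|\,|B|\le\deg\Phi$. This alone only gives $|A|\le(p+1)/2$, because $\deg\Phi=|B|\cdot(p+1)/2$ is far too large; the crux is to replace $\Phi$ by a \emph{nonzero} polynomial of degree at most $(p-1)/2+|B\cap(-A)|$ that still vanishes to order $\ge|B|$ at each point of $A$. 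I would build this by reducing $\Phi$ modulo $x^{p}-x$ in a way that preserves the high-order vanishing at the points of $A\subset\mathbb{F}_p$ — the usual device being to reduce only inside blocks of the shape $g(x)^{p}$, which are insensitive to differentiation in characteristic $p$ — and the saving of the factor $2$ would come precisely from the exponent $(p-1)/2$ of the residue-defining polynomial $x^{(p-1)/2}-1$ rather than the full $x^{p-1}-1$. The main obstacle, and the delicate heart of the argument (this is exactly the refinement of Hanson and Petridis), is carrying out this degree reduction while simultaneously keeping the multiplicity at least $|B|$ on $A$, producing precisely the additive correction $|B\cap(-A)|$ coming from the $x$-factor (the $a+b=0$ part) of $x^{(p+1)/2}-x$, and verifying that the reduced polynomial is not identically zero.
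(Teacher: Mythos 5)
Your proposal is not a complete proof, and you say as much yourself: the final paragraph declares that the degree reduction preserving multiplicity $\geq |B|$ on $A$ is ``the delicate heart of the argument'' and then does not carry it out. Note first that the paper offers no proof to compare against: Theorem~\ref{thm-HP2020} is quoted verbatim from \cite{HP2020} and used as a black box in the Appendix. Judged on its own terms, your write-up gets the bookkeeping right --- the identity $\sum_{a\in A,\,b\in B}\chi(a+b)=|A||B|-|B\cap(-A)|$, the second-moment computation $\sum_x h(x)^2=|B|(p-|B|)$ giving the classical $\sqrt{p}$-type bound $|A|(|B|-1)^2\leq |B|(p-|B|)$, and the observation that $\Phi(x)=\prod_{b\in B}\bigl((x+b)^{(p+1)/2}-(x+b)\bigr)$ vanishes to order $\geq|B|$ on $A$ --- but everything up to that point only recovers known, strictly weaker estimates. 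The entire content of the theorem is the factor-$2$ saving, and that is exactly the step you leave as an assertion.

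Moreover, the completion route you gesture at has a concrete obstruction. Reducing $\Phi$ modulo $x^p-x$ preserves \emph{values} on $\mathbb{F}_p$ but destroys \emph{vanishing orders}: differentiation does not commute with this reduction, so a polynomial of degree $<p$ agreeing with $\Phi$ on $\mathbb{F}_p$ need not vanish to order $|B|$ anywhere. The classical Stepanov device you invoke --- keeping blocks of the shape $g(x)^p$, which have zero derivative in characteristic $p$ --- applies when the auxiliary polynomial is \emph{built} in the form $\sum_j f_j(x)\,g_j(x)^p$ from the start; your $\Phi$ is a product of factors of degree $(p+1)/2$ and is not of that shape, and no recipe is given for putting it into one while retaining the multiplicity on $A$. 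Indeed, the actual Hanson--Petridis argument does not reduce the product $\Phi$ at all: it works with a \emph{linear combination} of shifted powers, roughly $P(x)=\sum_{a\in A}c_a(x+a)^{(p-1)/2+|B|-1}$, with the coefficients $c_a$ chosen by linear algebra to force high-order vanishing, and its delicate point is a non-vanishing lemma ($P\not\equiv 0$) resting on the nonsingularity modulo $p$ of a matrix of binomial coefficients --- precisely the kind of verification your sketch flags (``verifying that the reduced polynomial is not identically zero'') but never supplies, and for which no mechanism is visible in your setup. So the proposal is a correct framing plus a plausible-sounding but unsubstantiated plan; the theorem itself remains unproven by it.
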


\begin{proof}[Proof of Theorem~\ref{thm-appendix}]
Let $U \subset \mathbb{F}_p$ be the vertex set of a transitive subtournament of $T_p$.
Let $u=|U|$. 
Recall that there exists a linear order $<$ over $U$ such that for all $v_1, v_2 \in U$, $v_1<v_2$ if and only if $(v_1, v_2)$ is an edge of $T_p$.   
By this fact, for any integer $1\leq x \leq u-1$, there exists a partition $(U_1, U_2)$ of $U$ 
such that $|U_1|=x$ and $(u_1, u_2)$ is an edge of $T_p$ for all $u_1 \in U_1$, $u_2 \in U_2$.
Thus it holds by the definition of $T_p$ that $U_1-U_2=U_1+(-U_2) \subset Q_p \cup \{0\}$.
Since $|(-U_2) \cap (-U_1)| \leq |-U_1|=|U_1|=x$, it must hold by Theorem~\ref{thm-HP2020} that
\begin{equation}
\label{eq-appendix1}
    x(u-x)=|U_1||U_2| \leq  \frac{p-1}{2}+|(-U_2) \cap (-U_1)| \leq \frac{p-1}{2}+x
\end{equation}
for any integer $1\leq x \leq u-1$.
Equivalently, it must hold for any integer $1\leq x \leq u-1$ that
\begin{equation}
\label{eq-appendix2}
    \Bigl(x-\frac{u-1}{2} \Bigr)^2-\frac{(u-1)^2}{4}+\frac{p-1}{2}\geq 0.
\end{equation}
The theorem is obtained by a simple fact that the minimum value of the quadratic function in the left-hand side of (\ref{eq-appendix2}) among the integers in $[1, u-1]$ is $(u-1)^2/4+(p-1)/2$ if $u$ is odd and $(u-1)^2/4+(2p-1)/4$ if $u$ is even.
\end{proof}

\end{document}